    \DeclareFontFamily{U}{wncy}{}
    \DeclareFontShape{U}{wncy}{m}{n}{<->wncyr10}{}
    \DeclareSymbolFont{mcy}{U}{wncy}{m}{n}
    \DeclareMathSymbol{\Sha}{\mathord}{mcy}{"58} 
\newcommand{\defi}[1]{\textsf{#1}} 
\newcommand{\Aff}{\mathbb{A}}
\newcommand{\F}{\mathbb{F}}
\newcommand{\PP}{\mathbb{P}}
\newcommand{\Q}{\mathbb{Q}}
\newcommand{\Z}{\mathbb{Z}}
\newcommand{\Qbar}{{\overline{\Q}}}
\DeclareMathOperator{\Char}{char}
\DeclareMathOperator{\Frac}{Frac}
\DeclareMathOperator{\Spec}{Spec}
\newcommand{\intersect}{\cap} 
\newcommand{\Intersection}{\bigcap} 
\newcommand{\isom}{\simeq}
\newcommand{\To}{\longrightarrow}
\newcommand{\union}{\cup} 
\newcommand{\Union}{\bigcup} 
\newtheorem{theorem}{Theorem}[section]
\newtheorem{corollary}[theorem]{Corollary}
\newtheorem{proposition}[theorem]{Proposition}
\theoremstyle{definition}
\newtheorem{question}[theorem]{Question}
\theoremstyle{remark}
\newtheorem{remark}[theorem]{Remark}
\g@addto@macro\bfseries{\boldmath} 
\begin{document}

\title{Diophantine sets}
\subjclass[2020]{Primary 11U05; Secondary 13G05, 14A15, 14G99}
\keywords{Diophantine set, positive-existential set}

\author{Bhargav Bhatt}
\address{School of Mathematics, Institute for Advanced Study \& Department of Mathematics, Princeton University}
\email{bhargav.bhatt@gmail.com}
\urladdr{\url{https://www.math.ias.edu/~bhatt/}}

\author{Bjorn Poonen}
\thanks{B.B.\ was partially supported by a fellowship from the Packard Foundation and grants from the Simons Foundations (MPS-SIM-00622511, MPS-PERF-00001529-02).  B.P.\ was supported in part by Simons Foundation grant \#402472.  The research was done while B.P.\ was visiting the 2025 program ``Definability, decidability, and computability'' at the Hausdorff Research Institute in Bonn.}
\address{Department of Mathematics, Massachusetts Institute of Technology, Cambridge, MA 02139-4307, USA}
\email{poonen@math.mit.edu}
\urladdr{\url{http://math.mit.edu/~poonen/}}

\date{November 20, 2025}

\begin{abstract}
Diophantine subsets of $\Z$ play a key role in the negative answer to Hilbert's tenth problem.
The definition of diophantine set generalizes in several ways to other commutative rings.
We compare these definitions.
Along the way, we prove that for every finitely presented scheme $Y$ over a ring $R$, there exists an \emph{affine} $R$-scheme $X$ with a finitely presented $R$-morphism $X \to Y$ such that $X(R') \to Y(R')$ is surjective for every $R$-algebra $R'$.
\end{abstract}

\maketitle

\section{Introduction}\label{S:introduction}

A subset $A \subset \Z$ is \defi{diophantine} if there exists $f \in \Z[t,x_1,\ldots,x_m]$ such that 
\[
	A = \{a \in \Z : \exists x \in \Z^m \textup{ such that } f(a,x)=0\}.
\]
In other words, if one thinks of $t$ as a parameter, then $f$ defines a $1$-parameter family of diophantine equations, and $A$ is the set of parameter values for which the specialized diophantine equation is solvable in the remaining variables.
Diophantine sets play an essential role in the negative answer to Hilbert's tenth problem: the key step is proving that every computably enumerable subset of $\Z$ is diophantine; see \cite{Matiyasevich1993} for an exposition.

There are several ways to generalize the definition to ground rings other than $\Z$, and they are not always equivalent.
The goal of this article is to sort out the relationships between various definitions; see Theorem~\ref{T:main}.

Let $R$ be a ring.  (All rings in this paper are assumed commutative.)
\begin{itemize}
\item Call $A \subset R^n$ \defi{existential} if $A = \{a \in R^n : \phi(a)\}$ for some existential first-order formula $\phi(t)$, that is, a formula $(\exists x_1) \cdots (\exists x_m) \; \psi(t,x)$, where $\psi(t,x)$ is a Boolean combination of polynomial equations in $t_1,\dots,t_n,x_1,\dots,x_m$.
\item \defi{Positive-existential} is the same except that the only logical operators allowed in the Boolean combination are $\land$ (and) and $\lor$ (or).
\item \defi{Many-polynomial-diophantine} is the same except that only $\land$ is allowed; in other words, there exist $r \in \Z_{\ge 0}$ and $f_1,\ldots,f_r \in R[t_1,\ldots,t_n,x_1,\ldots,x_m]$ such that 
\[
	A = \{a \in R^n : \exists x \in R^m \textup{ such that } f_1(a,x)=\cdots=f_r(a,x)=0\}.
\]
\item \defi{One-polynomial-diophantine} is the same except that there is just one equation, with no Boolean operators; in other words, there exists $f \in R[t_1,\ldots,t_n,x_1,\ldots,x_m]$ such that 
\[
	A = \{a \in R^n : \exists x \in R^m \textup{ such that } f(a,x)=0\}.
\]
See \cite[Definition~1.2.1]{Shlapentokh2007}.
\item Let $X$ be a finitely presented $R$-scheme.
Call $A \subset X(R)$ \defi{morphism-diophantine} if there exists an $R$-scheme $Y$ and finitely presented $R$-morphism $\phi \colon Y \to X$ such that $A = \phi(Y(R))$.
\end{itemize}

\begin{remark}
\label{R:zeros}
If $f \in R[t_1,\ldots,t_n]$, then the set of zeros of $f$ in $R^n$ is one-polynomial-diophantine (take $m=0$ in the definition).
Likewise, the common zero set of $f_1,\ldots,f_r \in R[t_1,\ldots,t_n]$ in $R^n$ is many-polynomial-diophantine.
\end{remark}

\begin{remark}
Any morphism of finitely presented $R$-schemes $X \to X'$ maps morphism-diophantine subsets of $X(R)$ to morphism-diophantine subsets of $X'(R)$ (use the same $Y$).
\end{remark}

\begin{remark}
The notion of morphism-diophantine can be applied to $A \subset R^n$ by taking $X = \Aff^n_R$.
\end{remark}

\begin{theorem}
\label{T:main}
Let $R$ be a ring. \hfill
\begin{enumerate}[\upshape (a)]
\item \label{I:one implies many}
For $A \subset R^n$, one-polynomial-diophantine implies many-polynomial-diophantine.
\item \label{I:many implies one}
The converse holds for all $n$ and all $A \subset R^n$ if and only if 
\begin{equation}
\label{E:only (0,0)}
   \textup{there exists $g \in R[x,y]$ whose zero set in $R^2$ is $\{(0,0)\}$.}
\end{equation}
\item \label{I:many implies positive}
For $A \subset R^n$, many-polynomial-diophantine implies positive-existential.
\item \label{I:positive implies many}
The converse holds if and only if $(R \times \{0\}) \union (\{0\} \times R)$ is many-polynomial-diophantine.
\item \label{I:positive implies existential}
For $A \subset R^n$, positive-existential implies existential.
\item \label{I:existential implies positive}
The converse holds if and only if $R-\{0\}$ is positive-existential.
\item \label{I:many equals morphism}
For $A \subset R^n$, many-polynomial-diophantine is equivalent to morphism-diophantine.
\end{enumerate}
\end{theorem}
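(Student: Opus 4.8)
The plan is to prove the two implications separately, with essentially all the content concentrated in the direction morphism-diophantine $\Rightarrow$ many-polynomial-diophantine; here $X = \Aff^n_R$ throughout.

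For the easy direction, suppose $A$ is many-polynomial-diophantine, cut out by $f_1,\dots,f_r \in R[t_1,\dots,t_n,x_1,\dots,x_m]$. I would take $Y \subset \Aff^{n+m}_R$ to be the closed subscheme defined by $f_1 = \dots = f_r = 0$, and let $\phi \colon Y \to \Aff^n_R$ be the projection onto the first $n$ coordinates. The closed immersion $Y \injects \Aff^{n+m}_R$ is finitely presented because it is cut out by finitely many equations, and the projection $\Aff^{n+m}_R \to \Aff^n_R$ is finitely presented, so $\phi$ is finitely presented. Unwinding the definitions, $Y(R) = \{(a,x) \in R^{n+m} : f_i(a,x) = 0 \text{ for all } i\}$ and $\phi(Y(R)) = \{a \in R^n : \exists x \in R^m,\ f_i(a,x) = 0 \ \forall i\} = A$, so $A$ is morphism-diophantine.

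For the converse, suppose $A = \phi(Y(R))$ with $\phi \colon Y \to \Aff^n_R$ finitely presented. First I observe that $Y$ is finitely presented over $R$, being the composite of the finitely presented morphisms $Y \to \Aff^n_R \to \Spec R$. The key step is then to invoke the affine-covering result advertised in the abstract: there is an affine $R$-scheme $X$ together with a finitely presented $R$-morphism $\psi \colon X \to Y$ such that $X(R') \to Y(R')$ is surjective for every $R$-algebra $R'$. Taking $R' = R$ and composing, I obtain $\theta := \phi \circ \psi \colon X \to \Aff^n_R$ with $\theta(X(R)) = \phi(\psi(X(R))) = \phi(Y(R)) = A$. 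It remains to read off defining polynomials. Since $\psi$ is finitely presented and $Y$ is finitely presented over $R$, the affine scheme $X$ is finitely presented over $R$, say $X = \Spec B$ with $B = R[x_1,\dots,x_m]/(f_1,\dots,f_r)$. An $R$-point of $X$ is an $R$-algebra map $B \to R$, i.e.\ a tuple $x \in R^m$ with $f_i(x) = 0$ for all $i$, so $X(R) \cong \{x \in R^m : f_i(x) = 0 \ \forall i\}$. The morphism $\theta$ corresponds to an $R$-algebra map $R[t_1,\dots,t_n] \to B$ sending $t_j \mapsto g_j$; lifting each $g_j$ to $\tilde g_j \in R[x_1,\dots,x_m]$, the image of the point $x$ under $\theta$ is $(\tilde g_1(x),\dots,\tilde g_n(x))$. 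Hence
\[
   A = \{(\tilde g_1(x),\dots,\tilde g_n(x)) : x \in R^m,\ f_i(x) = 0 \ \forall i\},
\]
which is precisely the many-polynomial-diophantine set defined by the polynomials $f_1(x),\dots,f_r(x)$ and $t_1 - \tilde g_1(x),\dots,t_n - \tilde g_n(x)$ in $R[t_1,\dots,t_n,x_1,\dots,x_m]$.

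The main obstacle is entirely contained in the affine-covering result, which is where I expect the genuine difficulty to lie. The naive idea of taking a finite affine open cover $Y = \Union_i U_i$ and setting $X = \coprod_i U_i$ fails on two counts: a disjoint union of affines need not be affine, and, more seriously, an $R'$-point of $Y$ need not factor through any single $U_i$ once $\Spec R'$ is disconnected. The natural remedy is a Jouanolou--Thomason-type device, replacing $Y$ by an affine-space torsor over it whose structure-group cohomology vanishes on $R'$-points; but such devices classically require $Y$ to be quasi-projective or to admit an ample family of line bundles, whereas here $Y$ is an arbitrary finitely presented scheme over an arbitrary ring. Making the construction work at this level of generality is the crux; granting it, part~\eqref{I:many equals morphism} follows from the bookkeeping above.
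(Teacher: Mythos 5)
Your treatment of part~(\ref{I:many equals morphism}) is correct and is essentially the paper's own argument: the forward direction realizes a many-polynomial-diophantine set as the image of the $R$-points of $\Spec R[t,x]/(f_1,\dots,f_r)$ under projection, and the converse composes $\phi$ with an affine cover $\psi \colon X \to Y$ (Theorem~\ref{T:covered by affine}) and reads off equations $f_i(x)=0$, $t_j=\tilde g_j(x)$ from a presentation of $X$. Treating the covering theorem as an input is legitimate, since the paper likewise isolates it as a separate result; note only that your guess at how it is proved is off the mark. The paper does not use a Jouanolou--Thomason torsor (that device appears only as a remark covering the quasi-projective case); instead it maps $X' = \sqcup_{Y(R)} \Spec R$ to $Y$, factors this through $X'' = \Spec\bigl(\prod_{Y(R)} R\bigr)$ using the algebraization theorem for morphisms from affine schemes to quasi-compact quasi-separated schemes, and then writes $\prod_{Y(R)} R$ as a filtered colimit of finitely presented $R$-algebras to descend to a finitely presented affine $X$.

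The genuine gap is one of coverage: the statement is all of Theorem~\ref{T:main}, and your proposal addresses only part~(\ref{I:many equals morphism}). Parts (\ref{I:one implies many}), (\ref{I:many implies positive}), (\ref{I:positive implies existential}) are trivial inclusions of classes of formulas, but the three biconditionals require arguments you have not supplied. For (\ref{I:many implies one}), the ``only if'' direction takes a one-polynomial definition $f(t_1,t_2,x)=0$ of $\{(0,0)\} \subset R^2$ and specializes $x$ at a witness $u$ for $(0,0)$ to produce $g(t_1,t_2)=f(t_1,t_2,u)$ with zero set $\{(0,0)\}$; the ``if'' direction reduces $r$ equations to one by induction, replacing $f_1=f_2=0$ with $g(f_1,f_2)=0$. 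For (\ref{I:positive implies many}) and (\ref{I:existential implies positive}), the ``if'' directions introduce slack variables --- rewriting $f=0 \lor g=0$ as $f=z \land g=w \land (z=0 \lor w=0)$, resp.\ $f \ne 0$ as $f=z \land z\ne 0$ --- and then substitute the hypothesized many-polynomial-diophantine definition of $(R\times\{0\})\union(\{0\}\times R)$, resp.\ the positive-existential definition of $R-\{0\}$; the ``only if'' directions simply observe that these two sets are themselves positive-existential, resp.\ existential, so the assumed implication applies to them. These arguments are short, but they are part of the statement and must be recorded.
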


Given \eqref{I:many equals morphism}, we propose that, going forward, \defi{diophantine} should be defined as morphism-diophantine (or equivalently, for $A \subset R^n$, many-polynomial-diophantine).

Most of Theorem~\ref{T:main} is easy and well known.
The only complicated part is \eqref{I:many equals morphism}, which will be deduced from Theorem~\ref{T:covered by affine}.

Finally, in Sections \ref{S:2}--\ref{S:two axes}, we explore for which rings the conditions in \eqref{I:many implies one}, \eqref{I:positive implies many}, and \eqref{I:existential implies positive} hold.
The most substantial new result in these sections is Theorem~\ref{T:Frac R algebraically closed}, which states that if $K$ is a field that is not an algebraic closure of a finite field, then there exists $R$ with $\Frac R = K$ such that \eqref{E:only (0,0)} holds.

Here is one application of all of the above, which we record for convenience of use:

\begin{corollary}
\label{C:all equivalent}
Suppose that $R$ is a domain satisfying any of the following:
\begin{enumerate}[\upshape (i)]
\item $R$ is a finitely generated $\Z$-algebra.
\item $R$ is a finitely generated algebra over a field $k$, but $R$ is not an algebraically closed field.
\item $R$ is a localization of one of the above.
\end{enumerate}
Then all five notions (existential, positive existential, many-polynomial-diophantine, one-polynomial-diophantine, and morphism-diophantine) are equivalent.
\end{corollary}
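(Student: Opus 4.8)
The plan is to use Theorem~\ref{T:main} to reduce the asserted equivalence of all five notions to three ring-theoretic conditions, and then to verify these for $R$ as in (i)--(iii). By \eqref{I:one implies many}, \eqref{I:many implies positive}, \eqref{I:positive implies existential}, and \eqref{I:many equals morphism}, for every ring one has one-polynomial-diophantine $\Rightarrow$ many-polynomial-diophantine $\Leftrightarrow$ morphism-diophantine and many-polynomial-diophantine $\Rightarrow$ positive-existential $\Rightarrow$ existential. Hence to make all five coincide it suffices to establish the three reverse implications, which by \eqref{I:many implies one}, \eqref{I:positive implies many}, and \eqref{I:existential implies positive} are respectively equivalent to: condition \eqref{E:only (0,0)}; that $(R\times\{0\})\cup(\{0\}\times R)$ is many-polynomial-diophantine; and that $R\setminus\{0\}$ is positive-existential.

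The second of these is immediate for a domain: there $(R\times\{0\})\cup(\{0\}\times R)$ equals $\{(a,b)\in R^2 : ab=0\}$, the zero set of $xy\in R[x,y]$, which is many-polynomial-diophantine by Remark~\ref{R:zeros}.

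For \eqref{E:only (0,0)} I would first prove the general statement that it holds whenever $K:=\Frac R$ is not algebraically closed. Choosing a monic irreducible $q\in K[T]$ of degree $d\geq 2$ and clearing denominators by the substitution $T\mapsto bT$ for a suitable $b\in R\setminus\{0\}$, one obtains a monic $\tilde p\in R[T]$ of degree $d$ with no root in $K$; its homogenization $g(x,y)=y^d\,\tilde p(x/y)\in R[x,y]$ then has zero set $\{(0,0)\}$, since $g(a,b)=0$ with $b\neq 0$ would make $a/b$ a root of $\tilde p$ in $K$, while $b=0$ forces $a^d=0$. It then remains to check that $\Frac R$ is not algebraically closed in each case. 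In (i), $\Frac R$ is finitely generated over its prime field $F$, and such a field is never algebraically closed: otherwise $\overline{F}$ would be, as an intermediate field of the finitely generated extension $\Frac R/F$, finitely generated and hence finite over $F$, which it is not. In (ii), were $\Frac R$ algebraically closed, then either $\trdeg_k \Frac R\geq 1$, which is impossible because for a transcendental $t$ the intermediate field $\overline{k(t)}$ is infinite algebraic over $k(t)$ and so not finitely generated over $k$, or $\trdeg_k\Frac R=0$, in which case $\Frac R$ is finite over $k$ and $R$, being a domain that is a finite-dimensional $k$-algebra, is a field equal to the algebraically closed $\Frac R$, contradicting the hypothesis. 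Case (iii) follows since localizing a domain does not change its fraction field and the same $g$ lies in $R[x,y]\subseteq R'[x,y]$.

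The main obstacle is the last condition, that $R\setminus\{0\}$ is positive-existential. In contrast to ``$=0$'', the open condition ``$\neq 0$'' cannot be encoded positive-existentially for formal reasons alone---indeed it can genuinely fail---so this step requires real information about $R$ and is the substance of Sections~\ref{S:2}--\ref{S:two axes}. For the rings here the expected route is the classical one in the formally real cases, where $a\neq 0$ is written as the disjunction that $a$ or $-a$ is positive and positivity is captured by sums of squares (as for $\Z$ via Lagrange's theorem); the remaining finitely generated domains, including their non-real and positive-characteristic function-field cases as well as the localizations in (iii), are covered by the positive-existential definitions of the nonzero elements established in those sections. Combining these three verifications with the reduction above yields the equivalence of all five notions.
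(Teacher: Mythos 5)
Your reduction via Theorem~\ref{T:main} and your verification of two of the three resulting conditions match the paper's proof. The two-axes condition is handled identically (the zero set of $xy$ in a domain), and your homogenization argument for \eqref{E:only (0,0)} is essentially a re-proof of Proposition~\ref{P:Frac R not algebraically closed}, which the paper simply cites: the norm form $\Norm_{L/K}(x+y\alpha)$ there is the homogenization of the minimal polynomial of $\alpha$ up to a linear change of variable, so the construction is the same, and your case check that $\Frac R$ is not algebraically closed parallels the paper's. (One small slip: to clear denominators of a monic $q\in K[T]$ while keeping it monic over $R$ you want $\tilde p(T)=b^{d}q(T/b)$, whose roots are $b$ times those of $q$; the substitution $T\mapsto bT$ as literally written puts the powers of $b$ in the denominators and makes things worse.)

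The genuine gap is the third condition, that $R\setminus\{0\}$ is positive-existential. You correctly flag it as the substantive step, but you do not actually establish it: the sums-of-squares route applies essentially only to $\Z$ itself (and is unnecessary even there), and ``covered by the positive-existential definitions of the nonzero elements established in those sections'' neither identifies which part of Theorem~\ref{T:Moret-Bailly definition of nonvanishing} you are invoking nor verifies its hypothesis for the rings in (i)--(iii). The paper does this concretely: if $\dim R=0$ then $R$ is a field and $R\setminus\{0\}$ is defined by $(\exists x)\,tx=1$; if $\dim R>0$ then $\Frac R$ is a finitely generated field, hence carries no nontrivial henselian valuation, so $R$ is not local henselian and Theorem~\ref{T:Moret-Bailly definition of nonvanishing}\eqref{I:not local henselian} applies. (Alternatively, part (b) of that theorem covers all three cases at once, since by the general Nullstellensatz finitely generated $\Z$-algebras and $k$-algebras are noetherian Jacobson rings, and the rings in (iii) are localizations of such.) Without one of these verifications the argument is incomplete at exactly the point you identified as the main obstacle.
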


\begin{proof}
We check that the conditions for all the converses in Theorem~\ref{T:main} are satisfied: 

(b) If the domain $R$ is a finitely generated $\Z$-algebra, or a localization thereof, then $\Frac R$ is not algebraically closed.
If the domain $R$ is a finitely generated $k$-algebra with $\dim R > 0$, or a localization thereof, then $\Frac R$ is not algebraically closed.
If the domain $R$ is a finitely generated $k$-algebra and $\dim R = 0$, then $R$ is a field, and by assumption it is not algebraically closed.
Thus in all three cases, $\Frac R$ is not algebraically closed.
By Proposition~\ref{P:Frac R not algebraically closed}, \eqref{E:only (0,0)} holds.

(d) Since $R$ is a domain, $(R \times \{0\}) \union (\{0\} \times R)$ is the zero set of $xy$, hence one-polynomial-diophantine, hence many-polynomial-diophantine.

(f) If $\dim R = 0$, then $R$ is a field, so $R - \{0\}$ is defined by $(\exists x) \; tx=1$, hence positive-existential.
If $\dim R > 0$, then the function field $\Frac R$ cannot carry a nontrivial henselian valuation, so $R - \{0\}$ is positive-existential by work of Moret-Bailly; see Theorem~\ref{T:Moret-Bailly definition of nonvanishing}\eqref{I:not local henselian}.
\end{proof}

\section{Covering by an affine scheme}

\begin{theorem}
\label{T:covered by affine}
Let $R$ be a ring.
Let $Y$ be a finitely presented $R$-scheme.
Then there exists an \emph{affine} $R$-scheme $X$ with a finitely presented $R$-morphism $X \to Y$ such that $X(R) \to Y(R)$ is surjective.
\end{theorem}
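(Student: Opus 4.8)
The plan is to reduce to a finite affine cover of $Y$ and then build a single affine scheme out of a universal ``partition of unity'' subordinate to that cover. Since a finitely presented $R$-scheme is quasi-compact and quasi-separated, I can cover $Y$ by finitely many affine opens $U_1,\dots,U_k$. The naive guess $\coprod_i U_i\to Y$ is affine and finitely presented but is \emph{not} surjective on $R$-points: an $R$-point of $Y$ need not factor through any single $U_i$ (for instance $[2\!:\!3]\in\PP^1_\Z(\Z)$ lies in neither standard chart, and on the affine line with doubled origin a single connected point can meet both origins). Repairing this while keeping the source affine is the entire difficulty. It is cleanest to prove the stronger assertion that $X(R')\to Y(R')$ is surjective for \emph{every} $R$-algebra $R'$, which is also what survives the reduction below.

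For each $i$ choose a finite-type quasi-coherent ideal sheaf $\calI_i\subseteq\calO_Y$ cutting out the closed complement $Z_i:=Y\setminus U_i$; such sheaves exist because $Y$ is quasi-compact and quasi-separated (if desired one first spreads out to reduce to $R$ Noetherian, which is harmless since the strong conclusion is stable under base change). As the $U_i$ cover $Y$ we have $\bigcap_i Z_i=\emptyset$, equivalently $\sum_i\calI_i=\calO_Y$. Let $\pi\colon X\to Y$ be the affine $Y$-scheme of partitions of unity subordinate to the $\calI_i$: its points over a $Y$-scheme $T$ are the tuples $(\lambda_1,\dots,\lambda_k)$ with $\lambda_i\in\calI_i\calO_T$ and $\sum_i\lambda_i=1$. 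Concretely, over a chart on which $\calI_i$ is generated by $g_{i1},\dots,g_{im_i}$ this is the affine scheme $\{\sum_{i,j}a_{ij}g_{ij}=1\}$ cut out by the single displayed equation in the $a_{ij}$, and these local models glue (preserving the tautological $\lambda_i:=\sum_j a_{ij}g_{ij}$) to a finitely presented affine morphism $\pi$.

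Two facts then finish the proof. First, $\pi$ is surjective on $R'$-points for every $R'$: given $y\in Y(R')$, the pulled-back ideals $\calI_i\calO_{R'}$ are comaximal in $R'$, so a genuine partition of unity $1=\sum_i\lambda_i$ with $\lambda_i\in\calI_i\calO_{R'}$ exists, and this tuple is a point of $X(R')$ over $y$. This uses nothing beyond comaximality over the affine $\Spec R'$ --- no cohomology vanishing, and in particular $\pi$ is neither flat nor surjective as a morphism of schemes in general. Second --- and this is the crux --- $X$ is genuinely affine, not merely affine over the (possibly non-separated) scheme $Y$. Here Serre's affineness criterion applies: the tautological sections $\lambda_i$, viewed as global functions on $X$ via $\calI_i\subseteq\calO_Y$, satisfy $\sum_i\lambda_i=1$, so they generate the unit ideal of $\Gamma(X,\calO_X)$ and the principal opens $X_{\lambda_i}$ cover $X$; and on $X_{\lambda_i}$ the unit $\lambda_i$ forces $\calI_i\calO_X=\calO_X$, so $X_{\lambda_i}$ maps into $U_i$ and is thus a principal open of the affine scheme $\pi^{-1}(U_i)$, hence affine. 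A scheme admitting a finite cover by affine principal opens whose defining functions generate the unit ideal is affine, so $X$ is affine.

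The main obstacle is exactly this last step. Because $Y$ may be non-separated there is no vector-bundle torsor over $Y$ with affine total space, so Jouanolou--Thomason-type devices are unavailable; the construction instead uses a deliberately non-flat, non-bundle $\pi$ whose total space is forced to be affine by Serre's criterion through the single relation $\sum_i\lambda_i=1$, while point-surjectivity comes for free from comaximality of ideals over an affine base. The remaining points --- existence of the finite-type ideal sheaves $\calI_i$ and finite presentation of $X$ --- are routine.
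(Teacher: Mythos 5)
Your surjectivity and affineness arguments would both be fine \emph{if} the scheme $X$ existed, but the scheme is never actually constructed, and this is where the proof breaks. The local model $\Spec\bigl(\calO(V)[a_{ij}]/(\sum_{i,j}a_{ij}g_{ij}-1)\bigr)$ depends on the chosen generators $g_{ij}$ of $\calI_i|_V$: two generating sets on an overlap give schemes with different numbers of variables and no canonical isomorphism between them, so the local models do not glue. They also do not represent the functor $T\mapsto\{(\lambda_1,\dots,\lambda_k):\lambda_i\in\calI_i\calO_T,\ \sum_i\lambda_i=1\}$ (the passage from coefficients $(a_{ij})$ to $(\lambda_i)$ is surjective but far from injective), and you give no argument that this functor is representable; there is no reason it should be, since already $T\mapsto\Gamma(T,\calI\calO_T)\subset\Gamma(T,\calO_T)$ is not representable for a general finite-type ideal. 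The construction does work verbatim when each $\calI_i$ is generated by finitely many \emph{global} sections $g_{ij}\in\Gamma(Y,\calO_Y)$, because then the relative spectrum of $\calO_Y[a_{ij}]/(\sum a_{ij}g_{ij}-1)$ is honestly defined over all of $Y$. But that is impossible in general: for $Y=\PP^1_\Z$ and $U_1=\Aff^1_\Z$, any ideal sheaf cutting out the section at infinity has zero global sections, since $\Gamma(\PP^1_\Z,\calO)=\Z$ and a constant vanishing along that section is zero. Arranging global generation after twisting is precisely what an ample family of line bundles buys you, and that case is already covered by the Jouanolou--Thomason device of Remark~\ref{R:Jouanolou}; the whole content of the theorem is the general quasi-compact quasi-separated case, where no such family need exist.

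For contrast, the paper's proof sidesteps any local-to-global construction: it takes $X''=\Spec\bigl(\prod_{y\in Y(R)}R\bigr)$, invokes the nontrivial fact that for quasi-compact quasi-separated $Y$ the tautological map $\sqcup_{Y(R)}\Spec R\to Y$ extends (uniquely) over $X''$ \cite[Theorem~1.3]{Bhatt2016}, and then uses finite presentation of $Y$ to descend along a filtered colimit presentation of $\prod_{Y(R)}R$ by finitely presented $R$-algebras, landing on an affine, finitely presented $X\to Y$ that is still surjective on $R$-points. If you want to salvage your approach you would need a genuinely different mechanism for producing a single affine object out of the cover; as written, the ``scheme of partitions of unity'' is not a scheme.
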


\begin{remark}
Let $Y_1,\ldots,Y_n$ be an affine open cover of $Y$.
If $R$ is local, then the disjoint union $X \colonequals \sqcup Y_i$ works.
But if $R$ is not local, $Y(R)$ need not equal $\Union Y_i(R)$: a morphism $\Spec R \to Y$ does not necessarily have image contained in a single $Y_i$.
\end{remark}

\begin{remark}
\label{R:Jouanolou}
If $Y$ is quasi-projective over $R$ (or more generally $Y$ has an ample family of line bundles), then Jouanolou's trick (as generalized by Thomason \cite[Proposition~4.4]{Weibel1989}) produces a vector bundle $E \to Y$ and an $E$-torsor $X \to Y$ with $X$ affine, and then $X(R) \to Y(R)$ is surjective since vector bundles over affine schemes have no nontrivial torsors.
\end{remark}

\begin{proof}[Proof of Theorem~\ref{T:covered by affine}]
Let $I=Y(R)$.
Let $X' = \sqcup_I \Spec R$ and $X'' = \Spec\left(\prod_I R \right)$.
There is an obvious $R$-morphism $X' \to Y$ with $X'(R) \to Y(R)$ surjective by construction.
Since $Y$ is finitely presented over a ring, $Y$ is quasi-compact and quasi-separated, so \cite[Theorem~1.3]{Bhatt2016} implies that $X' \to Y$ factors uniquely as $X' \to X'' \to Y$.
By writing $\prod_I R$ as a filtered colimit of finitely presented $R$-algebras and using the finite presentation property of $Y$, we find a further factorization $X' \to X'' \to X \to Y$, where $X$ is an affine, finitely presented $R$-scheme.
The composite morphism was surjective on $R$-points, so $X \to Y$ is too.
\end{proof}

Although we do not need it for our application, we can also prove the following strengthening:

\begin{theorem}
\label{T:covered by affine 2}
Let $R$ be a ring.
Let $Y$ be a finitely presented $R$-scheme.
Then there exists an affine $R$-scheme $X$ with a finitely presented $R$-morphism $X \to Y$ such that $X(R') \to Y(R')$ is surjective \emph{for every $R$-algebra $R'$}.
\end{theorem}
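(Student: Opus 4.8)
The plan is to run the proof of Theorem~\ref{T:covered by affine} verbatim, but with a larger index set chosen so that surjectivity is forced for every $R$-algebra $R'$ at once. The mechanism that makes this possible is finite presentation of $Y$: the functor $R' \mapsto Y(R')$ commutes with filtered colimits of $R$-algebras, so every point of $Y$ valued in an arbitrary $R$-algebra descends to one valued in a \emph{finitely presented} $R$-algebra. Since finitely presented $R$-algebras form a set up to isomorphism, the collection
\[
	S \colonequals \{(R_0,y_0) : R_0 \text{ a finitely presented } R\text{-algebra and } y_0 \in Y(R_0)\}
\]
is a genuine set, not a proper class, which is exactly what legitimizes the product construction below.

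First I would form $X' = \sqcup_{(R_0,y_0)\in S} \Spec R_0$, equipped with the $R$-morphism to $Y$ given on each component by the corresponding $y_0$. I claim $X' \to Y$ is surjective on $R'$-points for every $R$-algebra $R'$. Indeed, given $y \in Y(R')$, write $R'$ as a filtered colimit of finitely presented $R$-algebras $R_\lambda$; since $Y$ is locally of finite presentation, $y$ is the image of some $y_\lambda \in Y(R_\lambda)$ under a structure map $R_\lambda \to R'$. Then $(R_\lambda,y_\lambda) \in S$, and the $R'$-point of the component $\Spec R_\lambda$ determined by $R_\lambda \to R'$ maps to $y$ under $X' \to Y$.

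Next I would affinize exactly as in the proof of Theorem~\ref{T:covered by affine}. Set $X'' = \Spec\bigl(\prod_{(R_0,y_0)\in S} R_0\bigr)$. Because $Y$ is quasi-compact and quasi-separated, \cite[Theorem~1.3]{Bhatt2016} gives a (unique) factorization $X' \to X'' \to Y$; writing $\prod_{S} R_0$ as a filtered colimit of finitely presented $R$-algebras and invoking the finite presentation of $Y$ once more, I obtain a further factorization $X'' \to X \to Y$ with $X$ affine and finitely presented over $R$. The morphism $X \to Y$ is then finitely presented since both $X$ and $Y$ are finitely presented over $R$. Finally, the composite $X' \to X'' \to X$ shows that any lift of an $R'$-point of $Y$ through $X'$ maps to a lift through $X$, so $X \to Y$ inherits surjectivity on $R'$-points for every $R$-algebra $R'$.

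The only genuinely new ingredient relative to Theorem~\ref{T:covered by affine} is the choice of index set $S$ together with the verification of universal surjectivity, and this is the step I expect to require the most care: finite presentation of $Y$ is used twice in an essential way — to descend an arbitrary $R'$-point to a finitely presented algebra (so that $S$ is large enough) and to guarantee that $S$ is a set (so that $\prod_S R_0$ exists and \cite[Theorem~1.3]{Bhatt2016} applies). Once these points are in place, the affinization and finite-presentation bookkeeping are identical to the earlier argument and present no new obstacle.
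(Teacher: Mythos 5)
Your proposal is correct and follows essentially the same route as the paper: index over pairs $(R_0,y_0)$ with $R_0$ a finitely presented $R$-algebra (taken up to isomorphism so the collection is a set), form the disjoint union and the $\Spec$ of the product, and affinize through a finitely presented $X$ exactly as in Theorem~\ref{T:covered by affine}. The only cosmetic difference is that you verify surjectivity of $X'(R')\to Y(R')$ for arbitrary $R'$ directly via the filtered-colimit descent, whereas the paper first reduces to checking surjectivity on finitely presented $R$-algebras; these are the same observation packaged in opposite order.
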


\begin{proof}
Since $Y$ is finitely presented over $R$, for each $R'$, each element of $Y(R')$ comes from an element of $Y(R_0)$ for some finitely presented $R$-algebra $R_0$.
Thus it suffices to find $X$ such that $X(R_0) \to Y(R_0)$ is surjective for every finitely presented $R$-algebra $R_0$.
Let $\mathscr{R}$ be a set of representatives of the isomorphism classes of finitely presented $R$-algebras.
Let $X' = \sqcup_{R_0 \in \mathscr{R}} \sqcup_{y \in Y(R_0)} \Spec R_0$ and $X'' = \Spec\left(\prod_{R_0 \in \mathscr{R}} \prod_{y \in Y(R_0)} R_0 \right)$.
Produce a factorization $X' \to X'' \to X \to Y$ as before.
For every $R_0$, the morphism $X' \to Y$ is surjective on $R_0$-points, so $X \to Y$ is too.
\end{proof}

\begin{remark}
Theorem~\ref{T:covered by affine 2} says that if $Y$ is a finitely presented scheme over a ring, then the category of affine schemes equipped with a morphism to $Y$ has a \emph{weak terminal object} $X$ (every object maps to $X$, but not necessarily uniquely), and $X$ can be taken to be finitely presented over $Y$.
\end{remark}

\begin{remark}
\label{R:algebraic space}
The proofs of Theorems \ref{T:covered by affine} and~\ref{T:covered by affine 2} generalize to the situation where $Y$ is an \emph{algebraic space} that is finitely presented over $R$.
\end{remark}

\section{Comparing the definitions}

\begin{proof}[Proof of Theorem~\ref{T:main}]
\hfill
\begin{enumerate}[\upshape (a)]
\item Trivial.

\item
$\Rightarrow$: Suppose that every many-polynomial-diophantine set is one-polynomial-diophantine.  Then the many-polynomial-diophantine subset $\{(0,0)\} \subset R^2$ defined by $t_1=t_2=0$ is one-polynomial-diophantine.
That is, there exists $f(t_1,t_2,x_1,\ldots,x_m)$ whose zero set contains $(0,0,u)$ for some $u \in R^m$ but contains no points with $(t_1,t_2) \ne (0,0)$.
Then $g(t_1,t_2) \colonequals f(t_1,t_2,u)$ has zero set $\{(0,0)\}$.

\noindent $\Leftarrow$:
Suppose that $g(x,y)$ has zero set $\{(0,0)\}$.
We prove by induction on $r$ that the common zero set of any $r$ polynomials $f_1,\ldots,f_r$ is the zero set of a single polynomial $f$.
If $r=0$, take $f=0$.
If $r=1$, take $f=f_1$.
If $r \ge 2$, apply the inductive hypothesis to $g(f_1,f_2),f_3,\ldots,f_r$.

\item Trivial.

\item 
$\Rightarrow$: 
The $(R \times \{0\}) \union (\{0\} \union R) = \{(x,y) \in R^2: x=0 \lor y=0\}$ is positive-existential.
If positive-existential implies many-polynomial-diophantine, 
then it is many-polynomial-diophantine too.

\noindent $\Leftarrow$: In $\phi(t)$, convert each expression $f(t,x) = 0 \lor g(t,x)=0$ to $f(t,x) = z \land g(t,x) = w \land (z=0 \lor w=0)$ and replace $z=0 \lor w=0$ with its many-polynomial-diophantine definition.
Eventually, we rewrite $\phi(t)$ in the form $(\exists x_1) \cdots (\exists x_m) \psi(t,x)$ for some new $m$ and $\psi$ such that $\psi$ is a finite conjunction of polynomial equations.

\item Trivial.

\item 
$\Rightarrow$: 
The set $R - \{0\} = \{x \in R : x \ne 0 \}$ is existential.
If existential implies positive-existential, then it is positive-existential too.

\noindent $\Leftarrow$: In $\phi(t)$, convert each expression $f(t,x) \ne 0$ to $f(t,x) = z \land z \ne 0$ and replace $z \ne 0$ with the equivalent positive-existential formula.

\item 
$\Rightarrow$: 
To say that $A \subset R^n$ is many-polynomial-diophantine
is to say that $A$ is the image of the $R$-points
under the projection morphism 
\[
	\Spec \frac{R[t_1,\ldots,t_n,x_1,\ldots,x_m]}{(f_1,\ldots,f_r)} \To \Spec R[t_1,\ldots,t_n] = \Aff^n_R
\]
for some $m$ and $r$ and $f_1,\ldots,f_r \in R[t_1,\ldots,t_n,x_1,\ldots,x_m]$.

\noindent $\Leftarrow$: Suppose that $A \subset R^n$ is morphism-diophantine.
By definition, $A = \phi(Y(R))$ for some morphism $\phi \colon Y \to \Aff^n_R$
of finitely presented $R$-schemes.
Theorem~\ref{T:covered by affine} provides a morphism $\psi \colon X \to Y$ of finitely presented $R$-schemes such that $\psi(X(R)) = Y(R)$.
Then $A = (\phi \psi)(X(R))$ and $X$ is affine and finitely presented, 
so $A$ is many-polynomial-diophantine.
\qedhere
\end{enumerate}
\end{proof}

\begin{remark}
In the definition of morphism-diophantine for $A \subset X(R)$, we required the $Y$ mapping to $X$ to be a scheme.
Remark~\ref{R:algebraic space} implies that allowing $Y$ to be an algebraic space would yield an equivalent definition.
One could also define morphism-diophantine for $A \subset X(R)$ with $X$ a finitely presented $R$-algebraic space.
\end{remark}

\section{Two-variable polynomials vanishing only at \texorpdfstring{$(0,0)$}{(0,0)}}
\label{S:2}

Recall that \eqref{E:only (0,0)} is the property that there exists $g \in R[x,y]$ whose zero set is $\{(0,0)\}$.

\begin{proposition}[\protect{cf.~\cite[Lemma~1.2.3]{Shlapentokh2007}}]
\label{P:Frac R not algebraically closed}
If $R$ is a domain whose fraction field $\Frac R$ is not algebraically closed, then \eqref{E:only (0,0)} holds.
\end{proposition}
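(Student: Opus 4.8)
The plan is to exploit the failure of algebraic closure of $K \colonequals \Frac R$ to manufacture a binary form over $K$ whose only zero is the origin, and then clear denominators to land in $R[x,y]$. The underlying idea is that homogenizing a one-variable polynomial with no root produces a form that cannot vanish away from $(0,0)$.

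First I would use the hypothesis to choose a nonconstant polynomial $h(t) = a_d t^d + \cdots + a_1 t + a_0 \in K[t]$ with $a_d \neq 0$ and $h$ having no root in $K$; such an $h$ exists precisely because $K$ is not algebraically closed. (Necessarily $d \ge 2$, since every degree-one polynomial over a field has a root, though this observation is not needed.) I would then homogenize $h$ to the binary form
\[
	H(x,y) \colonequals \sum_{i=0}^d a_i\, x^i y^{d-i} \in K[x,y],
\]
and the central claim to verify is that the zero set of $H$ in $K^2$ is exactly $\{(0,0)\}$. I would check this by a short case analysis on a hypothetical zero $(x,y) \ne (0,0)$: if $y = 0$, then $H(x,0) = a_d x^d \ne 0$ because $a_d \ne 0$ and $x \ne 0$; and if $y \ne 0$, then $H(x,y) = y^d\, h(x/y) \ne 0$ because $y \ne 0$ and $h$ has no root in $K$. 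Either way $H(x,y) \ne 0$, so no nonzero zero exists, while $H(0,0) = 0$ since $d \ge 1$.

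Finally I would descend from $K$ to $R$ by clearing denominators: choosing $c \in R \setminus \{0\}$ with $c a_i \in R$ for all $i$ and setting $g \colonequals c\,H \in R[x,y]$, the polynomial $g$ has the same zero set as $H$ over the field $K$ (as $c \ne 0$), so its zero set in $R^2 \subseteq K^2$ is again $\{(0,0)\}$, establishing \eqref{E:only (0,0)}. I do not anticipate a genuine obstacle here; the only points demanding care are extracting a root-free polynomial from the non-closedness of $K$ and handling the boundary case $y = 0$ in the homogenization, where one must invoke the leading coefficient $a_d$ rather than the constant term.
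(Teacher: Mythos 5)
Your proof is correct and follows essentially the same strategy as the paper's: produce a binary form over $K = \Frac R$ whose only zero in $K^2$ is $(0,0)$, then clear denominators to obtain $g \in R[x,y]$. The only difference is cosmetic — the paper takes the norm form $\operatorname{N}_{L/K}(x+y\alpha)$ of a nontrivial finite extension $L/K$, while you homogenize a root-free polynomial directly, which is if anything slightly more elementary.
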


\begin{proof}
Let $K=\Frac R$.
Since $K$ is not algebraically closed, there is a nontrivial finite extension $L/K$.
Let $\alpha \in L-K$.
Let $h(x,y)$ be the norm form $\operatorname{N}_{L/K}(x+y\alpha) \in K[x,y]$.
The only zero of $h$ in $K^2$ is $(0,0)$.
Let $g = r h$ where $r \in R - \{0\}$ is chosen so that $g \in R[x,y]$.
Then the only zero of $g$ in $R^2$ is $(0,0)$.
\end{proof}

For domains whose fraction field $K$ is algebraically closed, the answer is sometimes yes, sometimes no, as the next two propositions show.

\begin{proposition} 
\label{P:K algebraically closed}
If $K$ is an algebraically closed field, then \eqref{E:only (0,0)} fails.
\end{proposition}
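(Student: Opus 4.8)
The plan is to show that the zero set $V(g) \subset K^2$ of any candidate $g \in K[x,y]$ can never equal the single point $\{(0,0)\}$. The underlying reason is that over an algebraically closed (hence infinite) field, the zero set of a nonzero polynomial in two variables is either empty or infinite, so it can never be a single point.

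First I would dispose of the degenerate cases. If $g = 0$, then $V(g) = K^2$, which is infinite since an algebraically closed field is infinite, so $V(g) \ne \{(0,0)\}$; and if $g$ is a nonzero constant, then $V(g) = \emptyset$. Thus we may assume $g$ is nonconstant, and the problem reduces to proving that $V(g)$ is infinite.

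The main step is an elementary specialization argument, viewing $g$ as a polynomial in $y$ with coefficients in $K[x]$. Let $d = \deg_y g$. If $d \ge 1$, let $c(x) \in K[x]$ be the coefficient of $y^d$; this is a nonzero polynomial in $x$, so it has only finitely many roots. For each of the infinitely many $a \in K$ with $c(a) \ne 0$, the specialization $g(a,y) \in K[y]$ has degree exactly $d \ge 1$, hence has a root $b \in K$ by algebraic closure, producing a point $(a,b) \in V(g)$. These points have distinct first coordinates, so $V(g)$ is infinite. If instead $d = 0$, then $g \in K[x]$ is nonconstant, so it has a root $a \in K$, and the entire vertical line $\{a\} \times K$ lies in $V(g)$ and is already infinite. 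In either case $V(g) \ne \{(0,0)\}$, so \eqref{E:only (0,0)} fails.

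There is no serious obstacle here; the only points requiring care are remembering that an algebraically closed field is infinite (so that full lines, and the full plane, are genuinely infinite) and correctly splitting into the case where $y$ genuinely occurs and the case where $g$ depends only on $x$. A slicker alternative would be to observe that $V(g)$ is a hypersurface in $\Aff^2_K$, hence of dimension at least $1$ by Krull's principal ideal theorem, and that a positive-dimensional variety over an infinite field has infinitely many points; but the hands-on argument above is self-contained and avoids invoking dimension theory.
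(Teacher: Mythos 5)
Your proof is correct, but it takes a more elementary route than the paper. The paper disposes of the statement in one line: if the zero set of $g$ is nonempty, then by Krull's principal ideal theorem it has dimension at least $2-1>0$, and (implicitly) a positive-dimensional finite-type scheme over an algebraically closed field has infinitely many closed points, hence more than one $K$-point. This is exactly the ``slicker alternative'' you mention at the end. Your hands-on specialization argument --- splitting on $\deg_y g$, using that the leading coefficient $c(x)$ vanishes at only finitely many $a$, and invoking algebraic closure to solve $g(a,y)=0$ for each remaining $a$ --- is complete and avoids dimension theory entirely, at the cost of a case analysis (including the degenerate cases $g=0$ and $g$ a nonzero constant, which you handle correctly). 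The paper's argument is shorter and generalizes immediately to hypersurfaces in $\Aff^n_K$; yours is self-contained and makes explicit the infinitude of points that the dimension argument leaves implicit. Both are valid.
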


\begin{proof}
If the zero set of $g \in K[x,y]$ is nonempty, its Krull dimension is at least $2-1>0$.
\end{proof}

If $K$ is algebraic over a finite field, then any domain $R$ with fraction field $K$ equals $K$, and Proposition~\ref{P:K algebraically closed} applies.
Otherwise, we have the following:

\begin{theorem}
\label{T:Frac R algebraically closed}
If $K$ is a field that is not an algebraic closure of a finite field, there exists a domain $R$ with fraction field $K$ for which \eqref{E:only (0,0)} holds.
\end{theorem}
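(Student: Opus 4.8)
The plan is to dispose of the non-algebraically-closed case immediately and then concentrate on the genuinely hard case where $K$ is algebraically closed. If $K$ is not algebraically closed, take $R = K$: then $\Frac R = K$ is not algebraically closed, and Proposition~\ref{P:Frac R not algebraically closed} already produces a $g$ with zero set $\{(0,0)\}$. So assume $K$ is algebraically closed; since $K$ is not an algebraic closure of a finite field, $K$ has characteristic $0$ or contains an element transcendental over $\F_p$, and in either case $K$ carries a nontrivial valuation $v$ (extend the $p$-adic valuation on $\Q$, or a $t$-adic valuation on $\F_p(t)$). By Proposition~\ref{P:K algebraically closed} we cannot take $R = K$, so the whole problem is to build a strictly smaller domain with the same fraction field.

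Before constructing $R$ I would record two constraints that rule out the naive attempts and dictate the shape of the construction. First, $g$ cannot be taken homogeneous: if $g$ is homogeneous and $\theta \in K$ is any root of $g(\cdot,1)$, then writing $\theta = \alpha/\beta$ with $\alpha,\beta \in R$ (possible since $\Frac R = K$) gives a nonzero zero $(\alpha,\beta) \in R^2$. So $g$ must be genuinely inhomogeneous, and the argument must constrain the two coordinates individually rather than just their ratio. Second, $R$ cannot be a valuation ring of $K$: such a ring is integrally closed with algebraically closed residue field and divisible value group, and a branch of $\{g=0\}$ at the origin then produces zeros with both coordinates in the maximal ideal. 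Thus $R$ must be a non-integrally-closed subring that is sparse near the origin.

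The construction I would pursue uses $v$ to manufacture a non-algebraically-closed residual field by deliberately failing to be integrally closed. Fix a discretely valued subfield $(F, v|_F) \subseteq (K,v)$ with value group $\Z$, uniformizer $\pi$, valuation ring $\mathcal{O}_0$, and non-algebraically-closed residue field $\kappa_0$ (e.g.\ $F = \Q$, $\kappa_0 = \F_p$, or $F = \F_p(t)$, $\kappa_0 = \F_p$). I would build $R$ to be local with residue field $\kappa_0$ while keeping $\Frac R = K$, and take $g$ to be a lift to $R$ of an anisotropic binary form $\overline{g} \in \kappa_0[x,y]$ (a norm form of a finite extension of $\kappa_0$), corrected by terms in the maximal ideal so that $\overline g$ is the reduction of $g$. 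The unit part of the argument is then clean: a zero $(\alpha,\beta)\in R^2$ with $\min(v\alpha,v\beta)=0$ reduces to a nontrivial zero of the anisotropic $\overline{g}$ over $\kappa_0$, a contradiction.

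The main obstacle — and the real content of the theorem — is to exclude zeros lying deep in the maximal ideal, i.e.\ with $v\alpha, v\beta$ large. Here the tension is structural: making $\Frac R = K$ forces $R$ to contain, at some scale, directions with algebraically closed residual behaviour, and multiplying by a high-valuation element then absorbs a bad residue direction such as $\sqrt{\epsilon}$, recreating a nonzero zero of the lifted norm form. Overcoming this requires imposing the $\kappa_0$-rationality of the leading data of elements of $R$ at every valuation level simultaneously — a generalized-power-series / coefficient-field phenomenon, where the anisotropy of $\overline g$ obstructs the leading coefficients of a putative zero — while arranging that this restriction still leaves $\Frac R = K$. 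Reconciling these two demands is where I expect the genuine work to be concentrated, and is precisely where the hypothesis $K \neq \overline{\F}_p$ (equivalently, the presence of an element of infinite multiplicative order) should enter.
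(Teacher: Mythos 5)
Your reduction to the algebraically closed case is correct and matches the paper, and your observation that $g$ cannot be homogeneous is a genuine (and correct) constraint. But the proposal does not prove the theorem: you explicitly defer the central difficulty (``where I expect the genuine work to be concentrated''), and the framework you set up to attack it appears to be a dead end. You want $R$ to be a local subring of a valued field $(K,v)$ in which every nonzero element has leading coefficient (angular component) in a non-algebraically-closed field $\kappa_0$, at every valuation level, while still having $\Frac R = K$. These two demands are incompatible, not merely in tension: angular components are multiplicative, so if every nonzero $r\in R$ has $\kappa_0$-rational leading coefficient, then so does every element of $\Frac R$. But an algebraically closed $K$ with a nontrivial valuation contains units whose residues generate a proper extension of $\kappa_0$ (e.g.\ in $\Qbar$ with a $p$-adic valuation, a root of unity of order $p^2-1$ has residue generating $\F_{p^2}$ over $\F_p$); such an element cannot be a ratio of two elements with leading coefficients in $\kappa_0$. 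So the ``coefficient-field'' mechanism you propose cannot simultaneously control the deep-in-the-maximal-ideal zeros and allow $\Frac R = K$.

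The paper's proof uses an entirely different input: arithmetic geometry rather than valuation theory. One takes a smooth plane quartic $C$ over the prime field $F_0$ (or $\F_p(t)$, non-isotrivial in positive characteristic) passing through $(1{:}0{:}0)$; its affine equation $g(x,y)=0$ defines a genus-$3$ curve, so by the Mordell conjecture and its function-field analogues, $C(F)$ is \emph{finite} for every finitely generated extension $F\supset F_0$. Writing the countable $K$ as $\Union F_n$ with $F_n$ finitely generated, one inductively builds finitely generated subrings $A_0\subset A_1\subset\cdots$ with $\Frac A_n=F_n$ and $C'(A_n)=\{(0,0)\}$: when adjoining an algebraic element $\alpha$, the finitely many new rational points are dodged by taking $A_{n+1}=A_n+cA_n[\alpha]$ for a suitable conductor $c$; when adjoining a transcendental, no new points appear since every map $\PP^1\to C'$ is constant. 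Then $R=\Union A_n$ works, and uncountable $K$ are handled by an ultrapower plus a closing-off argument. The finiteness theorem for rational points is the idea your proposal is missing, and it is what replaces the (unworkable) attempt to control zeros valuation-level by valuation-level.
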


\begin{proof}
Because of Proposition~\ref{P:Frac R not algebraically closed},
we may assume that $K$ is algebraically closed.

First suppose that $K$ is countable.
If $\Char K=0$, let $A_0=\Z$ and $F_0=\Q$.
If $\Char K = p>0$, let $A_0=\F_p[t]$ and $F_0=\F_p(t)$.
Since $K$ is not an algebraic closure of a finite field, we may view $K$ as an extension of $F_0$.
Let $C$ be a smooth plane quartic in $\PP^2_{F_0}$ passing through $(1:0:0)$;
if $\Char K>0$, assume moreover that $C$ is not isotrivial.
The genus of $C$ is at least $2$ (it is $3$), so the Mordell conjecture \cite{Faltings1983} and its function field analogues \cite{Grauert1965,Samuel1966} imply that $C(F)$ is finite for any finitely generated extension $F \supset F_0$.

Let $\Aff^2_{F_0} \subset \PP^2_{F_0}$ be the standard affine patch containing $(1:0:0)$.
Let $C' = C \intersect \Aff^2_{F_0}$, which has an equation $g(x,y)=0$.
Then $C'(F_0)$ consists of $(0,0)$ and finitely many other points in $F_0^2$. 
By scaling coordinates, we may assume none of these other points lie in $A_0^2$.
Let $C'$ denote also the hypersurface in $\Aff^2_{A_0}$ defined by $g(x,y)=0$.
Thus $C'(A_0)=\{(0,0)\}$.

Write $K=\{\alpha_1,\alpha_2,\ldots\}$.
Let $F_n=F_0(\alpha_1,\ldots,\alpha_n)$.
By induction, we will construct finitely generated subrings $A_0 \subset A_1 \subset \cdots$ such that $\Frac A_n=F_n$ and $C'(A_n)=\{(0,0)\}$.
Suppose that $A_n$ has been constructed.
Write $A=A_n$, $F=F_n$, $\alpha = \alpha_{n+1}$, so $F_{n+1}=F(\alpha)$.

\emph{Case 1: $\alpha$ is algebraic over $F$.}
Since $F_{n+1}$ is finitely generated, $C(F_{n+1})$ is finite.
By scaling $\alpha$, we may assume that $\alpha$ is integral over $A$.
We will let $A_{n+1} = A + c A[\alpha]$ for a suitable $c \in A - \{0\}$.
We have $\Intersection_{c \in A-\{0\}} (A + c A[\alpha]) = A \subset F$, so for each of the finitely many $P \in C'(F_{n+1})-C'(F)$, we can find $c_P$ such that $P \notin (A + c_P A[\alpha])^2$.
Let $c=\prod_P c_P$.
Let $A_{n+1} = A + c A[\alpha]$.
Then $P \notin A_{n+1}^2$ for all $P$.
Hence $C'(A_{n+1})-C'(F) = \emptyset$,
so 
\[
	C'(A_{n+1})=C'(A_{n+1}) \intersect C'(F) = C'(A_{n+1} \intersect F) = C'(A) = \{(0,0)\}.
\]

\emph{Case 2: $\alpha$ is transcendental over $F$.}
Every rational map $\PP^1 \to C'$ is constant, so $C'(F(\alpha))=C'(F)$.
Thus if $A_{n+1} \colonequals A[\alpha]$, then $C'(A_{n+1})=C'(A)=\{(0,0)\}$.

\medskip

Now that we have all the $A_n$, let $R = \Union_{n \ge 0} A_n$.
Then $\Frac R = \Union_{n \ge 0} F_n = K$, 
and $C'(R) = \Union_{n \ge 0} C'(A_n) = \{(0,0)\}$.
This completes the proof when $K$ is countable.

\medskip

It remains to prove the result for one algebraically closed field of each characteristic and each uncountable cardinality $\kappa$.
By taking an ultrapower of a countable field (for which we already know the result), we find $K$ with $\#K \ge \kappa$ for which $R$ and $g$ exist.
By the theory of transcendence bases, we can find an algebraically closed subfield $K_0 \subset K$ of the desired cardinality $\kappa$, and we may assume that $K_0$ contains the coefficients of $g$.
Unfortunately, $R_0 \colonequals R \intersect K_0$ might have fraction field smaller than $K_0$.
To deal with this, we will enlarge $R_0$ and $K_0$.
Represent each element of $K$ as a quotient of elements of $R$,
so that we can speak of its numerator and denominator.
Given any subfield $L \subset K$, let $L'$ be the extension of $L$ generated by the numerators and denominators of all the elements of $L$, and let $\phi(L)$ be the algebraic closure of $L'$ in $K$.
For $n \in \Z_{\ge 0}$, let $K_{n+1} = \phi(K_n)$.
By induction, $\# K_n = \kappa$ for all $n$.
Let $K_\omega = \Union K_n$, so $\#K_\omega = \kappa$.
Let $R_\omega = R \intersect K_\omega$.
Each $a \in K_\omega$ lies in $K_n$ for some $n$,
so its numerator and denominator lie in $K_{n+1} \subset K_\omega$,
hence in $R_\omega$.
Thus $\Frac R_\omega = K_\omega$.
Also, the only solution to $g(x,y)=0$ in $R^2$ is $(0,0)$,
so the same is true in $R_\omega^2$.
In other words, the result holds for $K_\omega$.
\end{proof}

\begin{proposition}
Let $R$ be a normal domain
with $\Frac R$ isomorphic to $\Qbar$ or $\overline{\F_p(t)}$.
Then \eqref{E:only (0,0)} fails for $R$.
\end{proposition}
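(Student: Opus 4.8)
The plan is to show that normality forces $R$ to contain a large ``ring of integers'' over which every plane curve through the origin acquires extra integral points, and then to harvest such a point via a local--global principle. Throughout we may assume $g \neq 0$ and $g(0,0) = 0$, since otherwise the zero set of $g$ in $R^2$ is already visibly not $\{(0,0)\}$; our task is to exhibit a zero of $g$ in $R^2$ other than the origin. If $R = \Frac R$, then $R$ is an algebraically closed field and we are done by Proposition~\ref{P:K algebraically closed}, so we may assume $R \subsetneq K \isom \Frac R$. In either case $K$ is algebraically closed.

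First I would pin down a canonical subring of $R$ using integral closedness. If $K \isom \Qbar$, then $\Z \subset R$, and since $R$ is integrally closed in $K$ it contains every algebraic integer, so $\Zbar \subseteq R$. If $K \isom \overline{\F_p(t)}$, then the prime field forces $\overline{\F_p} \subseteq R$ (every element algebraic over $\F_p$ is integral over $R$), and since $K \neq \overline{\F_p}$ there is an element $s \in R$ transcendental over $\overline{\F_p}$; then $K$ is an algebraic closure of $\overline{\F_p}(s)$, and normality gives $\mathcal{O} \subseteq R$, where $\mathcal{O}$ is the integral closure of $\overline{\F_p}[s]$ in $K$. Setting $\mathcal{O} = \Zbar$ in the first case, in both cases $\mathcal{O}$ is the integral closure in $K$ of a principal ideal domain ($\Z$ or $\overline{\F_p}[s]$) having infinitely many primes, \emph{all of whose residue fields are algebraically closed}, and $\mathcal{O} \subseteq R$. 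The finitely many coefficients of $g$ lie in $K$, hence are algebraic over $\Frac \mathcal{O}$, so clearing denominators by a nonzero element of $\Frac \mathcal{O}$ leaves the zero set unchanged; thus we may assume $g \in \mathcal{O}[x,y]$, and it suffices to find a zero of $g$ in $\mathcal{O}^2 - \{(0,0)\}$. To see why this is the right target, note that if $g$ were monic in $y$ we would be finished immediately: for any $a \in \mathcal{O} - \{0\}$ the polynomial $g(a,y) \in \mathcal{O}[y]$ would be monic, so each root would be integral over $\mathcal{O}$, hence lie in $\mathcal{O}$ by normality, giving a zero $(a,b) \neq (0,0)$. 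The only obstruction is the leading coefficient of $g$ in $y$, which in general no change of variables over $\mathcal{O}$ renders a unit (for example when a fixed prime divides the entire leading form); this is precisely the difficulty addressed by the theory of integral points over $\Zbar$.

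The main step, and the crux, is to invoke Rumely's local--global principle for the existence of integral points over $\Zbar$, together with its equal-characteristic function-field analogue due to Moret-Bailly. Let $C_0$ be an irreducible component of $\{g = 0\}$ through the origin; since $g$ is a nonzero nonunit, $C_0$ is a geometrically irreducible $K$-curve, and $C_0 - \{(0,0)\}$ is a positive-dimensional affine $K$-variety. I would apply the theorem to $C_0 - \{(0,0)\}$ to produce a point in $\mathcal{O}^2$, that is, a zero of $g$ in $\mathcal{O}^2$ distinct from the origin. The hypotheses are the local solvability conditions, and the point of the argument is that these hold \emph{automatically} here: at every finite place of the base ring the residue field is algebraically closed, so the positive-dimensional reduction of $C_0$ contains a smooth point away from the origin, which lifts by Hensel's lemma to a local integral point; and at the archimedean place (when $\Char K = 0$) the branch of $C_0$ through the origin supplies a continuum of complex points with arbitrarily small coordinates, meeting the required capacity condition. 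Granting the theorem, the resulting point lies in $\mathcal{O}^2 \subseteq R^2$ and is a zero of $g$ other than $(0,0)$; hence no $g \in R[x,y]$ has zero set exactly $\{(0,0)\}$, i.e.\ \eqref{E:only (0,0)} fails for $R$.

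The genuinely nontrivial ingredient is this appeal to Rumely--Moret-Bailly; everything else is a reduction enabled by normality, the two essential consequences of which are that $\mathcal{O} \subseteq R$ (so integral points of $C_0$ are automatically $R$-points) and that roots integral over $\mathcal{O}$ actually lie in $\mathcal{O}$. The part I expect to require the most care in a complete write-up is the verification of the local conditions—especially phrasing the archimedean capacity condition for the punctured curve $C_0 - \{(0,0)\}$—and citing the correct form of the existence theorem uniformly across the number-field and function-field cases.
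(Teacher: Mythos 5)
Your overall strategy coincides with the paper's: both proofs reduce the claim to the Rumely--Moret-Bailly existence theorem for integral points \cite{Rumely1986,Moret-Bailly1989}, and both exploit normality of $R$ in the same way, namely to guarantee that the relevant integral closure sits inside $R$. The packaging differs slightly: the paper descends to the finitely generated ring $R_0$ generated by the coefficients of a geometrically irreducible factor of $g$ (plus a transcendental in the function-field case), passes to its normalization $R_1$, an excellent Dedekind domain satisfying condition~(\textbf{T}), and applies the theorem there; you work over the full ring of all integers $\mathcal{O}$ (e.g.\ $\Zbar$), which is not noetherian, so to quote the theorem you would in any case have to perform the same descent to a finite level.

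The one step that would fail as written is your verification of the local conditions for the \emph{punctured} curve $C_0 - \{(0,0)\}$. You argue that at each finite place the reduction of $C_0$ has a smooth point away from the origin, which lifts by Hensel's lemma; but at a prime of bad reduction the special fiber of the closure of $C_0$ can be everywhere non-reduced (e.g.\ $y^2 - p\,x^3$ reduces to $y^2=0$ above $p$), in which case it has no point at which the model is smooth over the base, and the smooth form of Hensel's lemma is unavailable. The paper sidesteps this entirely: it applies Moret-Bailly's theorem to the \emph{un-punctured} family $\Spec R_1[x,y]/(g) \to \Spec R_1$, whose surjectivity --- the only local hypothesis in that formulation --- is verified for free by the $(0,0)$ section, and then uses the fact that the conclusion produces \emph{infinitely many} integral solutions, so that all but one of them differ from the origin. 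You should restructure your final step the same way, rather than removing the origin before invoking the theorem.
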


\begin{proof}
This is a consequence of work of Rumely \cite{Rumely1986}, extended by Moret-Bailly \cite{Moret-Bailly1989}.
Let $g \in R[x,y]$ be such that $g(0,0)=0$.
We will show that $g(x,y)=0$ has infinitely many solutions over $R$.

Factor $g$ over the algebraically closed field $\Frac R$; 
some factor vanishes at $(0,0)$ and may be scaled to assume that
it has coefficients in $R$.
Without loss of generality, replace $g$ by this factor.
Now $g$ is geometrically irreducible.

Let $R_0$ be the ring generated by the coefficients of $g$, together with one transcendental element of $R$ if $\Frac R = \overline{\F_p(t)}$.
Let $R_1$ be the integral closure of $R_0$ in its fraction field,
so $R_0 \subset R_1 \subset R$.
Then $R_1$ is an excellent Dedekind domain satisfying condition~(\textbf{T}) of \cite{Moret-Bailly1989}.
The morphism $\Spec R_1[x,y]/(g) \to \Spec R_1$ is surjective because of the $(0,0)$ section.
Applying \cite[\S6]{Moret-Bailly1989} to $R_1$ proves that $g(x,y)=0$ has infinitely many solutions over $R_1$, and hence also over $R$.
\end{proof}

\begin{question}
Does \eqref{E:only (0,0)} hold for \emph{some} normal domain with algebraically closed fraction field?
\end{question}

\section{Defining the nonzero elements}
\label{S:nonzero elements}

\begin{theorem}[Moret-Bailly]
\label{T:Moret-Bailly definition of nonvanishing}
Let $R$ be a noetherian ring.
\begin{enumerate}[\upshape (a)]
\item \label{I:not local henselian}
If $R$ is a domain that is not local henselian, then $R-\{0\}$ is positive-existential.
\item
If $R$ is a localization of a noetherian Jacobson ring, then $R-\{0\}$ is positive-existential.
\item 
If $R$ is a henselian excellent local ring and $\dim R > 0$, then $R-\{0\}$ is \emph{not} positive-existential.
\end{enumerate}
\end{theorem}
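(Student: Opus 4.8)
The plan is to read (a) and (b) as existence assertions and (c) as an impossibility assertion, and to play them off the single organizing fact that \emph{a positive-existential formula is preserved under arbitrary ring homomorphisms}: polynomial equations, $\land$, and $\lor$ all persist, and no $\neg$ or $\ne$ occurs. Hence to \emph{establish} positive-existential definability of $R-\{0\}$ one must manufacture solutions whenever $t\ne 0$, which is precisely the province of Moret-Bailly's Skolem-type existence theorems \cite{Moret-Bailly1989}; and to \emph{refute} it one exhibits a homomorphism, or a limit of such, that carries a nonzero element toward $0$ while a putative witness survives.

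For (a), I would look for a formula $(\exists x)\,\psi(t,x)$ whose solution set $\{t\in R:(\exists x)\,\psi(t,x)\}$ is the image of a projection $W(R)\to R$, where $W\to\Aff^1_R$ is an auxiliary family arranged so that the fibre $W_0$ over $t=0$ is \emph{empty} as a scheme, while for each $a\in R-\{0\}$ the fibre $W_a$ is an $R$-scheme satisfying the hypotheses of Moret-Bailly's existence theorem (geometrically integral generic fibre, local solubility), for instance one cut out by a norm form attached to a finite extension of $\Frac R$ and localized away from $t$. Emptiness of $W_0$ gives the easy inclusion, namely that $0$ is not in the solution set, since there are no $R$-points above $t=0$; and for $a\ne 0$ Moret-Bailly's theorem produces an $R$-point of $W_a$, giving the hard inclusion $a\in\{t:(\exists x)\,\psi(t,x)\}$. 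The hypothesis that $R$ is \emph{not} local henselian is exactly what licenses this last step, as it is the obstruction-free case of the local--global existence principle. Statement (b) I would then reduce to (a) after isolating the degenerate case: if $R$ is a field, $R-\{0\}=\{t:(\exists x)\,tx=1\}$ is visibly positive-existential; otherwise $\dim R\ge 1$, and a localization of a noetherian Jacobson ring that is not a field is not a henselian local ring, so (a) applies (alternatively one invokes the Jacobson form of Moret-Bailly's theorem directly).

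The genuinely self-contained part is (c), which I would run through Greenberg's approximation theorem --- exactly why \emph{excellent} and \emph{henselian} are hypothesized. Suppose $R-\{0\}$ were defined by $(\exists x)\,\psi(t,x)$ with $\psi=\bigvee_j\bigwedge_i(f_{ij}=0)$. Since $\dim R\ge 1$ the noetherian local ring $R$ is not Artinian, so $\mm^n\ne 0$ for all $n$; choose $a_n\in\mm^n-\{0\}$. Each $a_n\ne 0$ yields a witness $x_n$ fulfilling some disjunct $j(n)$, and by pigeonhole one index $j$ recurs for infinitely many $n$. Reducing $\bigwedge_i f_{ij}(a_n,x_n)=0$ modulo $\mm^N$ and using $a_n\in\mm^N$ for $n\ge N$, the $t=0$ slice $V\colonequals\Spec R[x]/(f_{ij}(0,x))_i$ satisfies $V(R/\mm^N)\ne\emptyset$ for every $N$. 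Greenberg's theorem, valid for excellent henselian local rings, then forces $V(R)\ne\emptyset$, so $(\exists x)\,\psi(0,x)$ holds and $0\in R-\{0\}$, a contradiction.

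The main obstacle is the forward direction of (a), and hence of (b): arranging the family $W$ so that its nonzero-parameter fibres genuinely meet the hypotheses of Moret-Bailly's existence theorem, and confirming that ``not local henselian'' is the precise condition under which that theorem delivers the $R$-points. By contrast (c) is routine once Greenberg's theorem is granted; the only points there needing care are the pigeonhole over the finitely many disjuncts and the use of the non-Artinian hypothesis to supply nonzero elements arbitrarily deep in $\mm$.
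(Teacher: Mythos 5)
The first thing to say is that the paper does not prove this theorem at all: its ``proof'' is a pointer to the literature (\cite{Denef-Lipshitz1978} for rings of integers, \cite[Proposition~2.2.4]{Shlapentokh2007} for rings of $S$-integers, and \cite{Moret-Bailly2007} for the general statement). So your proposal has to stand on its own, and it splits cleanly into a part that is essentially right and a part with a real gap. Part (c) is the solid piece: the specialization-closure of positive-existential conditions, the choice of $a_n\in\mm^n-\{0\}$ (legitimate since $\dim R>0$ rules out Artinian, so $\mm^n\neq 0$), the pigeonhole over disjuncts, and the passage to the $t=0$ slice are all correct, and this is indeed the shape of Moret-Bailly's negative result. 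The one point you should make explicit is that you need the \emph{strong} (uniform) form of Greenberg/Artin approximation: knowing $V(R/\mm^N)\neq\emptyset$ for every $N$ does not by itself give a point of $V$ over the completion, because an inverse limit of nonempty infinite sets can be empty; you need the Artin--Greenberg function (solvability mod $\mm^{\beta(1)}$ implies genuine solvability), whose validity for excellent henselian local rings is itself a deep theorem.

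The gap is in (a), and it propagates to (b). Your entire existence argument is delegated to ``Moret-Bailly's existence theorem'' applied to the fibres $W_a$ for $a\neq 0$, but the Rumely--Moret-Bailly local--global existence principle of \cite{Moret-Bailly1989} holds over excellent Dedekind domains satisfying condition~(\textbf{T}) (e.g.\ $\Zbar$), together with local solubility hypotheses; it simply does not apply to an arbitrary noetherian domain that fails to be local henselian, such as $\Z_{(p)}$ or $k[x,y]$, and ``not local henselian'' is not the hypothesis of any such theorem. You have therefore not identified the mechanism that actually produces witnesses for $t\neq 0$; the proof in \cite{Moret-Bailly2007} instead exploits the failure of henselianity directly (two maximal ideals and CRT-type tricks in the non-local case; a simple residual root that does not lift, i.e.\ a suitable \'etale algebra and its norm form, in the local non-henselian case), which is a different and much more hands-on construction. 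For (b), your reduction to (a) breaks for two reasons: statement (a) assumes $R$ is a \emph{domain} while (b) does not, so the non-domain case of (b) is untouched; and the claim that a positive-dimensional localization of a noetherian Jacobson ring is never henselian local is itself a nontrivial assertion that you would need to prove rather than assert.
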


\begin{proof}
For the ring of integers in a number field, see \cite[Proposition~1(b)]{Denef-Lipshitz1978}.
For the ring of $S$-integers in a global field (even when $S$ is infinite), see \cite[Proposition~2.2.4]{Shlapentokh2007}.
The general case is \cite{Moret-Bailly2007}.
\end{proof}

See \cite{Moret-Bailly2007} for further results along these lines.

\section{Defining the union of two axes}
\label{S:two axes}

If $R$ is a domain, then $(R \times \{0\}) \union (\{0\} \times R)$ is one-polynomial-diophantine, since it is the set of solutions to $xy=0$.
For non-domains, sometimes $(R \times \{0\}) \union (\{0\} \times R)$ is many-polynomial-diophantine (necessarily via polynomials different from $xy$), and sometimes it is not: 

\begin{proposition}
\label{P:two axes}
For a ring $R$, the following are equivalent:
\begin{enumerate}[\upshape (i)]
\item \label{I:not a product}
       $R$ is not a product of two nonzero rings.
\item \label{I:connected or empty}
       $\Spec R$ is connected or empty.
\item \label{I:finite union}
      Any finite union of many-polynomial-diophantine sets is many-polynomial-diophantine.
\item \label{I:two axes}
The set $(R \times \{0\}) \union (\{0\} \times R)$ is many-polynomial-diophantine.
\end{enumerate}
\end{proposition}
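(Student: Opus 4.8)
The plan is to prove the cyclic chain \eqref{I:not a product} $\Rightarrow$ \eqref{I:connected or empty} $\Rightarrow$ \eqref{I:finite union} $\Rightarrow$ \eqref{I:two axes} $\Rightarrow$ \eqref{I:not a product}, which establishes all four equivalences at once. The implication \eqref{I:not a product} $\Rightarrow$ \eqref{I:connected or empty} is the standard dictionary between idempotents and clopen decompositions of $\Spec R$: if $\Spec R$ is nonempty and disconnected, a clopen decomposition produces an idempotent $e \ne 0,1$ and hence a decomposition $R \cong eR \times (1-e)R$ into two nonzero factors, contradicting \eqref{I:not a product}; and $\Spec R = \emptyset$ corresponds to $R = 0$, which is vacuously not a product of two nonzero rings. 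I would record this at the outset.

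For \eqref{I:connected or empty} $\Rightarrow$ \eqref{I:finite union}, I would pass through the morphism-diophantine description afforded by Theorem~\ref{T:main}\eqref{I:many equals morphism}. Write each of the finitely many many-polynomial-diophantine sets $A_k \subset R^n$ as $\phi_k(Y_k(R))$ for finitely presented $R$-schemes $\phi_k \colon Y_k \to \Aff^n_R$, and set $Y = \sqcup_k Y_k$ with the induced morphism $\phi \colon Y \to \Aff^n_R$. The key point is that when $\Spec R$ is connected or empty, every morphism $\Spec R \to Y$ factors through a single component $Y_k$, so $Y(R) = \bigsqcup_k Y_k(R)$ and therefore $\phi(Y(R)) = \bigcup_k A_k$. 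Thus $\bigcup_k A_k$ is morphism-diophantine, hence many-polynomial-diophantine again by \eqref{I:many equals morphism}. The implication \eqref{I:finite union} $\Rightarrow$ \eqref{I:two axes} is then immediate, since $R \times \{0\}$ and $\{0\} \times R$ are the zero sets of $v$ and of $u$ respectively (Remark~\ref{R:zeros}), so their union falls under \eqref{I:finite union}.

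The substantive implication is \eqref{I:two axes} $\Rightarrow$ \eqref{I:not a product}, which I would prove by contraposition. Assuming $R \cong R_1 \times R_2$ with $R_1, R_2 \ne 0$, I would first establish the structural lemma that a subset $A \subset R^n$ is many-polynomial-diophantine if and only if, under $R^n = R_1^n \times R_2^n$, it is a ``rectangle'' $A_1 \times A_2$ with each $A_i \subset R_i^n$ many-polynomial-diophantine over $R_i$. This comes from decomposing each defining polynomial $f \in R[t,x]$ as a pair $(f^{(1)}, f^{(2)})$ over $R_1$ and $R_2$ and observing that the existential quantifier and the conjunction of equations split independently across the two factors. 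Granting this, it remains to check that the two-axis set $T = (R \times \{0\}) \union (\{0\} \times R)$ is \emph{not} a rectangle: the points of $T$ corresponding to $((0,1),(0,0))$ and $((0,0),(1,0))$ in $R_1^2 \times R_2^2$ force $(0,1) \in T_1$ and $(1,0) \in T_2$, whence a rectangle would also contain $((0,1),(1,0))$; but this point corresponds to $(u,v)$ with $u = (0,1) \ne 0$ and $v = (1,0) \ne 0$, so it lies outside $T$ — here I use $R_1, R_2 \ne 0$ so that $1 \ne 0$ in each factor. This contradiction closes the cycle.

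I expect the rectangle lemma and its non-example to be the main obstacle: the forward implications are soft, relying only on connectedness and the morphism-diophantine reformulation, whereas the failure of \eqref{I:two axes} over a genuine product requires the precise observation that many-polynomial-diophantine sets respect the product decomposition (in particular that $uv = 0$ does \emph{not} cut out $T$ once $R$ has zero divisors arising from the two factors).
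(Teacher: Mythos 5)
Your proposal is correct and follows essentially the same route as the paper: the same cyclic chain \eqref{I:not a product}$\Rightarrow$\eqref{I:connected or empty}$\Rightarrow$\eqref{I:finite union}$\Rightarrow$\eqref{I:two axes}$\Rightarrow$\eqref{I:not a product}, the same idempotent argument, the same connectedness-forces-a-single-component idea, and the same product-set obstruction for the last implication (your witness points differ from the paper's but play the identical role, and only the ``only if'' half of your rectangle lemma is actually needed). The one genuine difference is in \eqref{I:connected or empty}$\Rightarrow$\eqref{I:finite union}: you route the disjoint union $\sqcup_k Y_k$ through the morphism-diophantine equivalence of Theorem~\ref{T:main}\eqref{I:many equals morphism}, which silently imports Theorem~\ref{T:covered by affine}, whereas the paper stays elementary by realizing the disjoint union directly as the closed subscheme $(X_0 \times \{0\}) \sqcup (X_1 \times \{1\})$ of $\Aff^{n+m+1}$, so that the union is exhibited by an explicit system of polynomial equations with one extra variable. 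Both arguments are valid; the paper's is more self-contained, while yours generalizes more readily if one starts from morphism-diophantine data rather than explicit equations.
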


\begin{proof} \hfill

\eqref{I:not a product}$\Rightarrow$\eqref{I:connected or empty}:
(This is well-known.)
We prove the contrapositive.
Suppose that $\Spec R$ is the disjoint union of nonempty open sets $U_0$ and $U_1$.
For $i=0,1$, let $r_i \in R$ be the element that is $1$ on $U_i$ and $0$ on $U_{1-i}$.
Thus $r_0$ and $r_1$ are idempotents summing to $1$, 
so $R = r_0 R \times r_1 R$.

\eqref{I:connected or empty}$\Rightarrow$\eqref{I:finite union}:
If $\Spec R$ is empty, then $R=0$ and \eqref{I:finite union} holds.

Now assume that $\Spec R$ is connected.
It suffices to consider the union of \emph{two} many-polynomial-diophantine sets of $R^n$.
Write the first as the image of $X_0(R)$ under the projection $\Aff^{n+m} \to \Aff^n$ for some finitely presented closed subscheme $X_0 \subset \Aff^{n+m}$.
Similarly, write the second as the image of $X_1(R)$ for some $X_1 \subset \Aff^{n+m}$; we may assume that the $m$ is the same, by enlarging the smaller $m$ if necessary.
Let $Y$ be the (disjoint) union of $Y_0 \colonequals X_0 \times \{0\}$ and $Y_1 \colonequals X_1 \times \{1\}$ in $\Aff^{n+m+1}$.
Since $\Spec R$ is connected, any morphism $\Spec R \to Y$ has image contained in either $Y_0$ or $Y_1$.
That is, $Y(R) = Y_0(R) \union Y_1(R)$.
Thus the image of $Y(R)$ under the projection $\Aff^{n+m+1} \to \Aff^n$ 
is the union of the two given sets.
Finally, $Y$ is a finitely presented closed subscheme of $\Aff^{n+m+1}$,
so the image of $Y(R)$ is many-polynomial-diophantine.

\eqref{I:finite union}$\Rightarrow$\eqref{I:two axes}:
Both $R \times \{0\}$ and $\{0\} \times R$ are many-polynomial-diophantine (even one-polynomial diophantine).
By \eqref{I:finite union}, their union is many-polynomial-diophantine.

\eqref{I:two axes}$\Rightarrow$\eqref{I:not a product}:
We prove the contrapositive.
Suppose that $R \isom R_1 \times R_2$ with $R_1,R_2 \ne 0$.
Call $A \subset R^n = R_1^n \times R_2^n$ a product set if it is $A_1 \times A_2$
for some sets $A_i \subset R_i^n$.
Each $f \in R[x_1,\ldots,x_n]$ amounts to a pair of functions $f_i \in R_i[x_1,\ldots,x_n]$,
so the zero set of $f$ is a product set.
Intersecting product sets yields a product set,
and the image of a product set under a projection $R^{n+m} \to R^n$
is a product set.
Thus every many-polynomial-diophantine subset of $R^n$ is a product set.
But $(R \times \{0\}) \union (\{0\} \times R)$ is not a product set,
because, viewed in $R_1^2 \times R_2^2$,
it contains $((1,0),(1,0))$ and $((0,1),(0,1))$
but not $((1,0),(0,1))$.
Thus $(R \times \{0\}) \union (\{0\} \times R)$ is not many-polynomial-diophantine.
\end{proof}


\section*{Acknowledgments} 

We thank Laurent Moret-Bailly for several comments and corrections, 
and in particular for pointing out Remark~\ref{R:Jouanolou}.

\bibliographystyle{amsalpha}
\bibliography{diophantine} 

\end{document}